\documentclass[11pt,reqno]{amsart}
\usepackage[utf8]{inputenc}
\usepackage[T1]{fontenc}
\usepackage{lmodern}
\usepackage{amsfonts,amsthm,amsmath,amssymb}
\usepackage{graphicx}
\usepackage{enumerate}
\usepackage{hyperref}
\usepackage{color}
\usepackage{tikz-cd}
\usepackage[margin=1in]{geometry}
\usepackage[
    maxbibnames=99,
    backend=biber,
    style=alphabetic,
    sorting=nyt,
    giveninits=true
]{biblatex}
\DeclareFieldFormat{pages}{#1}
\renewbibmacro{in:}{%
  \ifentrytype{article}
    {}
    {\bibstring{in}%
     \printunit{\intitlepunct}}}
\DeclareFieldFormat
[article,inbook,incollection,inproceedings,patent,thesis,unpublished]
  {title}{\mkbibemph{#1}}
\DeclareFieldFormat{journaltitle}{#1\isdot}
\DeclareFieldFormat[article]{volume}{\mkbibbold{#1}}
\DeclareFieldFormat[article]{number}{\bibstring{number}\addnbspace #1}

\renewbibmacro*{journal+issuetitle}{%
  \usebibmacro{journal}%
  \setunit*{\addspace}%
  \iffieldundef{series}
    {}
    {\newunit
     \printfield{series}%
     \setunit{\addspace}}%
  \printfield{volume}%
  \setunit{\addspace}%
  \usebibmacro{issue+date}%
  \setunit{\addcomma\space}%
  \printfield{number}%
  \setunit{\addcolon\space}%
  \usebibmacro{issue}%
  \setunit{\addcomma\space}%
  \printfield{eid}
  \newunit}

\newtheoremstyle{mystyle}
  {}
  {}
  {\itshape}
  {}
  {\bfseries}
  {.}
  { }
  {\thmname{#1}\thmnumber{ #2}\thmnote{ (#3)}}

\theoremstyle{mystyle}
\newtheorem{Thm}{Theorem}[section]
\newtheorem{Lem}[Thm]{Lemma}
\newtheorem{Cor}[Thm]{Corollary}
\newtheorem{Prop}[Thm]{Proposition}

\theoremstyle{definition}

\theoremstyle{remark}
\newtheorem{Rmk}[Thm]{Remark}

\newcommand{\Z}{\mathbb{Z}}

\title{Deciding strong quasipositivity and quasipositivity}

\author{Marc Kegel}
\address{Universidad de Sevilla, Dpto.\ de Álgebra,
Avda.\ Reina Mercedes s/n,
41012 Sevilla, Spain}
\email{kegelmarc87@gmail.com}

\author{Qiuyu Ren}
\address{Department of Mathematics, Stanford University, Stanford, CA 94305, USA}
\email{qren18@stanford.edu}

\begin{document}

\begin{abstract}
We present two results concerning the decidability of (strong) quasipositivity of links in the 3-sphere. First, using a result of Dynnikov--Prasolov, we provide an algorithm to decide whether or not a link is strongly quasipositive. Second, we reduce the decision problem of the quasipositivity of links to that of braids.
\end{abstract}

\maketitle

\section{Introduction}
This article is motivated by the following decision problems.
\begin{itemize}
    \item Is quasipositivity for a braid decidable?
    \item Is strong quasipositivity for a braid decidable?
    \item Is quasipositivity for a link decidable?
    \item Is strong quasipositivity for a link decidable?
\end{itemize}
The precise version of these questions is the following. Does there exist an algorithm that takes as input a braid word in the Artin generators or a diagram of an oriented link and outputs whether or not the braid or the link is (strongly) quasipositive?

Of these four problems, strong quasipositivity for braids is known to be decidable, as we recall in Remark~\ref{rem:SQP_decidable_braid}. The focus of this article is on the two corresponding decision problems for links. We prove that strong quasipositivity of links is decidable, and reduce the quasipositivity problem for links to the quasipositivity problem for braids. The latter remains open in general.

\subsection{Deciding strong quasipositivity for links}

\begin{Thm}\label{thm:SQP_links}
There exists an algorithm that decides whether or not a given oriented link is strongly quasipositive.
\end{Thm}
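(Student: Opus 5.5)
The plan is to reduce strong quasipositivity of a link to a question about braids of bounded index, and then use the result of Dynnikov--Prasolov to bound the index. We use Rudolph's characterization: an oriented link $L$ is strongly quasipositive if and only if it is the closure of a strongly quasipositive braid, that is, a product of positive band generators $\sigma_{i,j}=(\sigma_i\cdots\sigma_{j-2})\sigma_{j-1}(\sigma_i\cdots\sigma_{j-2})^{-1}$. A further standard fact is that for such a braid the Bennequin bound is sharp, so $\chi(L)=n-k$ for an SQP braid on $n$ strands with $k$ bands, and the resulting quasipositive surface is a maximal Euler characteristic Seifert surface. Given a diagram of $L$, we can compute $\chi(L)$ (the Seifert genus plus the number of components) algorithmically, for instance via normal surface theory or Haken's algorithm. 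This bounds $k=n-\chi(L)$ once $n$ is fixed.

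The main step is to bound the braid index $n$ of a strongly quasipositive representative that we need to search for. Dynnikov--Prasolov proved a result on the monotonic simplification of braids and Legendrian/transverse links (the ``generalized Jones conjecture'' and related results). A strongly quasipositive braid realizes the Bennequin bound, so its closure, viewed as a transverse link, has maximal self-linking number $\overline{sl}(L)=-\chi(L)$. Their theorem implies that any braid realizing this maximum can be destabilized (using exchange moves, with no positive stabilizations needed) down to a braid of minimal braid index representing a transverse link with the same self-linking. From this, I would aim to show that if $L$ is SQP, then some SQP braid representative has index at most a computable bound $N$, for example the number of Seifert circles of the given diagram or the braid index, which is computable through rectangular diagrams and Dynnikov's monotone simplification. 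Once $n\le N$ and $k=n-\chi(L)$ are both bounded, there are only finitely many words in band generators. For each of them we decide whether its closure is isotopic to $L$, which is possible by the solution of the link recognition problem (Haken, Hemion, Matveev, or via Kuperberg/Lackenby). The algorithm answers yes exactly when some candidate matches.

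The obstacle I expect is the step controlling braid index while preserving strong quasipositivity. Destabilizations and exchange moves do not obviously preserve SQP form. My first approach would be to argue on the surface level instead: being SQP is equivalent to $L$ bounding a quasipositive Seifert surface, that is, an incompressible surface that is a subsurface of a fiber of a positive torus link or of some positive braid, as in Rudolph. I would then try to use Dynnikov--Prasolov's theorem on rectangular diagrams of surfaces to put such a surface into a normal form of bounded complexity relative to the input diagram. If that fails, the fallback is to try to show that any braid with $\overline{sl}=-\chi$ at minimal index is SQP whenever $L$ is. That would only require testing finitely many minimal-index braids, enumerated up to the finitely many conjugacy and exchange classes at that index, each of which can be checked using the known decidability of strong quasipositivity for braids from Remark~\ref{rem:SQP_decidable_braid}.
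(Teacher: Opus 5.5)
\textbf{There is a genuine gap, at exactly the step you flag.} Your argument stands or falls with a computable bound on the index of \emph{some} strongly quasipositive braid representative of $L$, and you do not supply one.

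Granting the Dynnikov--Prasolov simplification results in the form you quote, they give at best the following. Negative destabilizations are excluded because $sl=-\chi(L)=\overline{sl}(L)$. So your SQP braid is related to a minimal-index braid by conjugations, exchange moves and positive (de)stabilizations, i.e.\ by a transverse isotopy of the closure. For quasipositivity this suffices, because Orevkov proved these moves preserve quasipositivity. That is Hayden's argument, which the paper uses for Theorem~\ref{thm:finite_QP}. For strong quasipositivity no such invariance is known. Nor is it known that SQP links have SQP representatives of minimal, or any computably bounded, index. If it were, the theorem would be immediate, since only finitely many band words have $b(L)-\chi(L)$ bands on $b(L)$ strands.

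Your fallback, that every minimal-index braid with $sl=-\chi(L)$ is SQP when $L$ is, is a stronger and equally unproven claim. Even granting Birman--Menasco's finiteness of exchange classes, testing one braid per class would again require exchange moves to preserve SQP. The ``first approach'' via rectangular diagrams of surfaces is not yet an argument, since it does not say what complexity is bounded or why. (Minor slip: $\chi(L)$ is not the genus plus the number of components; for a connected surface with $\mu$ boundary components, $\chi=2-2g-\mu$.)

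\textbf{The missing idea} is to drop braids and split the problem at the level of Seifert surfaces.

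First, one can decide whether a \emph{given} surface $F$ is quasipositive. By Baader--Ishikawa this means $F$ is isotopic to a Legendrian ribbon. The paper shows this holds if and only if the boundary $D(F)$ of a product neighborhood $[-1,1]\times F$ is isotopic to a convex surface with dividing set $\Gamma_F=\{0\}\times\partial F$. One direction uses the ribbon's characteristic foliation being divided by $\Gamma_F$, so Giroux makes $D(F)$ convex; the other uses Legendrian realization of a spine of $F_+$. This is exactly what the Dynnikov--Prasolov algorithm decides.

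Second, a quasipositive Seifert surface attains the Bennequin bound, hence has maximal Euler characteristic. The paper proves that maximal-$\chi$ Seifert surfaces can be algorithmically listed, finitely many up to \emph{free} isotopy. The ingredients are:
\begin{enumerate}
\item reduction to nonsplit links;
\item the JSJ decomposition;
\item normal surface theory in the hyperbolic pieces;
\item an explicit horizontal/vertical analysis in the Seifert fibered pieces;
\item the observation that a torus twist is freely isotopic to the identity, because $\mathrm{Diff}^+(S^3)$ is connected.
\end{enumerate}
Since quasipositivity of a surface is invariant under isotopy, testing each surface on the list decides strong quasipositivity of $L$.
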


A link is \textit{strongly quasipositive} if it has a quasipositive Seifert surface. These notions were introduced by Rudolph~\cite{Rudolph1990,Rudolph1992}. By the Bennequin inequality, a quasipositive Seifert surface always achieves maximal Euler characteristic among all Seifert surfaces of the given link. It is an open problem whether sharpness of the Bennequin inequality conversely implies strong quasipositivity (cf.~\cite[Problem~1.69]{K3}).

Theorem~\ref{thm:SQP_links} is a consequence of the following two assertions. The first is essentially a consequence of key results of Baader--Ishikawa~\cite{BaaderIshikawa2009}, who characterize quasipositive surfaces in terms of convex surface theory, and Dynnikov--Prasolov~\cite{DynnikovPrasolovDistinguishing}, who develop a combinatorial version of convex surface theory which makes the detection algorithmically possible.

\begin{Prop}\label{prop:QP_surfaces}
There is an algorithm that decides whether or not a given compact, oriented (not necessarily connected) surface in $S^3$ is quasipositive.\footnote{Here the input of a surface $F$ in $S^3$ is given as a triangulation of a pair $(S^3,F)$.}
\end{Prop}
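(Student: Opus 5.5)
The plan is to turn quasipositivity of $F$ into a Legendrian-realizability question via Baader--Ishikawa, and then decide that question combinatorially via Dynnikov--Prasolov.

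\textbf{Step 1 (reformulation via Baader--Ishikawa).} By \cite{BaaderIshikawa2009}, a Seifert surface $F\subset S^3$ is quasipositive if and only if it is isotopic to the ribbon of a Legendrian graph in the standard tight contact structure $\xi_{\mathrm{st}}$. Equivalently, $F$ can be isotoped to a convex surface whose dividing set is as simple as possible: it is parallel to $\partial F$, so that $F_-$ is a collar of the boundary and $F_+$ is the core. The ribbon condition also fixes the framing: along each edge of the Legendrian graph, the contact framing must agree with the surface framing. So the problem becomes deciding whether the isotopy class of $(S^3,F)$ contains such a Legendrian ribbon representative. The paper takes the surface as compact, oriented, and not necessarily connected, so I would first check that this characterization, or a componentwise version of it, holds in that generality. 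I would also dispose of closed components separately; presumably these are never quasipositive, or are excluded by convention.

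\textbf{Step 2 (combinatorial encoding).} Dynnikov--Prasolov \cite{DynnikovPrasolovDistinguishing} represent Legendrian links, and surfaces via rectangular diagrams of surfaces, combinatorially. They show that Legendrian types correspond to equivalence classes of rectangular diagrams under explicit moves, and that convex surfaces with prescribed dividing data can be searched for algorithmically. I would proceed as follows:
\begin{enumerate}
\item From the triangulation of $(S^3,F)$, extract a spine graph $\Gamma\subset F$ with $F$ a regular neighbourhood of $\Gamma$. Record the framing of $\Gamma$ induced by $F$.
\item Ask whether $\Gamma$, as a framed spatial graph, admits a Legendrian realization whose contact framing equals the surface framing, up to isotopy of the graph together with handle slides of the spine, since the spine is not canonical.
\end{enumerate}
Using rectangular-diagram technology, the Legendrian graph types with a fixed topological type appear as rectangular diagrams of bounded complexity within each class. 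The key finiteness input from Dynnikov--Prasolov is the monotonic simplification / bounded-complexity result: if a surface of a given isotopy class is realizable as a ribbon at all, then some rectangular diagram of surface realizing it can be reached from a given one by a finite, effectively bounded search.

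\textbf{Step 3 (the algorithm).} Enumerate rectangular diagrams of surfaces isotopic to $F$, using the algorithmic recognition of isotopy of surfaces in $S^3$, which is decidable by Haken--Matveev normal surface theory. For each, test the combinatorial criterion for being a Legendrian ribbon (dividing set boundary-parallel). To make this a decision procedure rather than a semi-decision, use Dynnikov--Prasolov's theorem that the relevant convex realization, if it exists, is reached without increasing complexity, which bounds the search.

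\textbf{Main obstacle.} The hardest step is termination: producing a computable bound so that a negative answer is certified. I would first try to invoke directly the Dynnikov--Prasolov result that realizability of a surface with prescribed dividing configuration in $\xi_{\mathrm{st}}$ is decidable. Then I would verify that the Baader--Ishikawa criterion is exactly such a prescribed-dividing-configuration condition, namely that the dividing curves are boundary-parallel with the correct Thurston--Bennequin relation $tb = $ surface framing.
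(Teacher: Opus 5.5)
Your overall strategy (Baader--Ishikawa plus a Dynnikov--Prasolov convex-realizability test) is the right one. But the step you defer to the end, identifying quasipositivity with a prescribed-dividing-set condition that Dynnikov--Prasolov can decide, is where the proof actually lives, and the condition you propose is wrong.

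You ask that $F$ itself be isotopic to a convex surface with dividing set parallel to $\partial F$, $F_-$ a boundary collar, and $tb$ equal to the surface framing. Take $F$ a disk, which is quasipositive. No such convex disk with Legendrian boundary exists in $(S^3,\xi_{st})$: the closed dividing curve would be homotopically trivial, violating Giroux's criterion. Equivalently, $\partial F$ would be a Legendrian unknot with $tb=0$. Moreover, the available decision procedure (Lemma~\ref{lem:DP_dividing_set_algorithm}) takes as input a \emph{closed} surface with a multicurve, not a surface with boundary.

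The missing idea is to pass to the double:
\begin{enumerate}
\item Take $\nu(F)\cong[-1,1]\times F$, so that $\partial\nu(F)\cong D(F)$ is closed, and prescribe $\Gamma_F=\{0\}\times\partial F$. This cuts $D(F)$ into two full copies $F_\pm$ of $F$, not a collar and a core.
\item Prove that $F$ is quasipositive if and only if $(D(F),\Gamma_F)$ is isotopic to a convex surface together with its dividing set. If $F$ is a Legendrian ribbon, condition (b) says the characteristic foliation on the boundary of a thin neighbourhood is divided by $\Gamma_F$. Giroux's theorem then gives convexity, with the orientation of $\Gamma_F$ coming from (a). Conversely, Legendrian realization of a spine of $F_+$ makes $F_+$ a Legendrian ribbon.
\item A triangulation of $(S^3,D(F),\Gamma_F)$ is computable from one of $(S^3,F)$, so the Dynnikov--Prasolov algorithm decides the question outright.
\end{enumerate}

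Your Steps 2--3 do not fill this gap. The ``monotonic simplification / bounded complexity'' statement you attribute to Dynnikov--Prasolov for ribbon realizations is not an available theorem. Searching over spines up to handle slides, or over rectangular diagrams isotopic to $F$, gives at best a semi-decision procedure that halts only on positive answers. The termination problem you flag as the main obstacle is exactly what the doubling reduction sidesteps, because the closed-surface criterion is already decidable.
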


For hyperbolic links, by a normal surface argument, there are only finitely many Seifert surfaces with maximal Euler characteristic, up to isotopy rel boundary, and Theorem~\ref{thm:SQP_links} for these links follows easily from Proposition~\ref{prop:QP_surfaces}. For non-hyperbolic links, this is not necessarily true (see e.g.~\cite{Eisner}), but we prove a similar finiteness result with the isotopy relaxed to free isotopies.

\begin{Prop}\label{prop:finite_Seifert_surfaces}
There is an algorithm that takes as input an oriented link $L$ and outputs a finite list (perhaps with repetitions) of all Seifert surfaces of $L$ (without closed components) with maximal Euler characteristic, up to free isotopy.
\end{Prop}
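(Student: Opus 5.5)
Work in the exterior $M = S^3 \setminus \nu(L)$, which is compact, orientable and irreducible (after splitting off split components, which we handle separately). The plan is to use normal surface theory together with the JSJ decomposition. A maximal Euler characteristic Seifert surface without closed components is incompressible and $\partial$-incompressible in $M$. After the standard normalization it is isotopic, rel the meridional-longitudinal boundary pattern, to a normal surface with respect to a fixed triangulation $\mathcal T$ of $M$, which we compute from the diagram. For split links we first decide splitness and split the link algorithmically. A Seifert surface of a split link with maximal Euler characteristic is, after isotopy, a disjoint union of such surfaces for the split pieces, so we may assume $L$ is non-split.

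In the hyperbolic case the finiteness argument is the standard one. By Haken theory, a maximal $\chi$ Seifert surface can be isotoped to a normal surface that is a sum of fundamental surfaces. Among these, a minimal one (least weight in its isotopy class) is a sum of fundamental surfaces each of which has nonnegative contribution and the correct boundary. Since $\chi$ is additive and incompressible normal pieces have $\chi\le 0$, only finitely many combinations of fundamental surfaces achieve the fixed value $\chi_{\max}$, except for the addition of normal tori and annuli with $\chi=0$. Moreover, $\chi_{\max}$ itself is computable; compare Haken's genus algorithm. So the outputs are all normal sums $\sum n_i F_i$ with $\chi=\chi_{\max}$, the boundary class equal to the longitude of each component, and no tori or annuli summands beyond a bounded multiplicity.

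The main obstacle is exactly these $\chi=0$ summands: essential tori and annuli. In a hyperbolic (atoroidal, anannular) exterior there are none, so the list is finite. In general, Haken sums with essential tori change the surface by twisting along the torus. This is how infinitely many surfaces up to isotopy rel boundary arise, as in Eisner's examples.

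To handle this I would use the JSJ decomposition, which is algorithmically computable, and argue as follows. A Haken sum $F + T$ with $T$ an essential torus or annulus in a Seifert fibered JSJ piece is freely isotopic to $F$ composed with a Dehn twist along $T$. Dehn twists along tori are isotopic to the identity in $M$ up to twisting about the boundary, so free isotopy absorbs them. Concretely, I would prove that the set of maximal $\chi$ normal surfaces with the right boundary is of the form $\{S + \sum m_j T_j\}$ with $S$ in a finite computable set and the $T_j$ vertical tori or annuli. I would then show each $S+mT$ is freely isotopic to $S$, using that the twist extends to an isotopy of $S^3$ moving $L$, rather than fixing it. Here "free isotopy" means an isotopy of $S^3$ carrying $L$ to itself, not rel $L$. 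The step I expect to be delicate is verifying that every infinite family arises this way, particularly annuli meeting $\partial M$ in cable and composite pieces. There, twisting along a swallow-follow torus or a cabling annulus should again be realized by an ambient isotopy rotating a companion. Finally we output the finite list of $S$ as triangulations of $(S^3, S)$.
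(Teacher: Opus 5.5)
\textbf{There is a genuine gap.} You have the right reason why finiteness can only hold up to free isotopy. A twist along a torus $T\subset S^3\setminus\nu L$ extends by the identity to a diffeomorphism of $S^3$. That diffeomorphism is isotopic to $\mathrm{id}_{S^3}$ (by Cerf, or directly because $T$ bounds a solid torus on one side), so twisted surfaces are freely isotopic. This is the paper's Claim~1. Your first justification, that torus twists are isotopic to the identity in $M$ up to boundary twisting, is false: twists along essential tori are typically nontrivial mapping classes of $M$. They become trivial only in $S^3$, where $L$ is allowed to move.

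The gap is the central finiteness statement: that there is a computable finite list of surfaces such that every maximal-$\chi$ Seifert surface is obtained from one of them by torus twists. You only announce it (``I would prove\dots''), and the mechanism you propose is wrong, because a Haken sum with a normal torus is not a twist.
\begin{enumerate}
\item If $S\cap T=\emptyset$, then $S+mT$ simply acquires closed components.
\item If $S\cap T$ consists of $k>1$ parallel curves and the exchanges are coherent, then $S+T$ reconnects the sheets of $S$ on the two sides of $T$ by a one-step cyclic shift. In general this is not the image of $S$ under any diffeomorphism supported near $T$; only $S+kT$ is an honest twist.
\item When several fundamental tori meet one another (e.g.\ vertical tori over intersecting curves in a $P_n\times S^1$ piece), the surfaces $S+\sum m_jT_j$ are not visibly iterated twists of $S$ at all.
\end{enumerate}
So ``each $S+mT$ is freely isotopic to $S$'' is false as stated, and nothing in your argument rules out other sources of infinitely many surfaces. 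Two smaller corrections: in the hyperbolic case, boundary-parallel normal tori do occur with unbounded multiplicity, so the list is finite only after modding out by boundary twists. And annulus summands are not the issue, since the fixed boundary bounds their multiplicity.

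The missing idea is to localize to the JSJ pieces, where the surfaces can actually be classified. The paper puts $S$ in minimal position with the JSJ and boundary tori and proceeds in two steps.
\begin{enumerate}
\item (Claim~2) The multicurves $S\cap T_i$ lie in a computable finite set. Next to a hyperbolic piece this follows from normal surface finiteness. Next to a Seifert fibered piece, the base orbifold has negative Euler characteristic, so bounded $\chi$ forces bounded covering degree of horizontal pieces, hence bounded intersection with the fiber slope. This bound, combined with the distinct fiber slope of an adjacent Seifert piece (or with the meridian on a boundary torus), pins down the multicurve. The one exceptional case, a $P_n\times S^1$ boundary torus on which meridian and fiber agree, is settled by homology.
\item (Claim~3) With boundary fixed, essential surfaces in each piece are enumerated up to torus twists. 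In hyperbolic pieces this uses normal surfaces. In Seifert fibered pieces, a horizontal surface is determined by a bounded-degree cyclic cover of $\mathcal O$ together with an element of an $H_1(|\mathcal O|)$-torsor, and $H_1(|\mathcal O|)$ acts on it by vertical-torus twists. Vertical surfaces correspond to multiarcs in $\mathcal O$, finitely many up to the mapping class group, whose Dehn twist generators are again vertical-torus twists.
\end{enumerate}
Your remarks about swallow-follow tori and cabling annuli point toward these pieces but do not supply this analysis.

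A smaller omission is in the split reduction. You also need that free isotopy classes of the union are products of those of the pieces. The paper checks this by dragging the other balls along during each isotopy and moving them back afterwards.
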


\subsection{Deciding quasipositivity for links}
In a slightly different direction, we can reduce the decidability of quasipositivity for a link to the decidability of quasipositivity for a braid. The following result is essentially a consequence of arguments in Hayden~\cite{hayden2018minimal} and Ito~\cite{Ito2022quantitative}.

\begin{Thm}\label{thm:finite_QP}
    If a link $L$ is quasipositive with braid index $b(L)$, then $L$ admits a quasipositive $b(L)$-braid representative of Artin word length at most
    \begin{align*}
       2b(L) \big( b(L)-\chi(L) \big),
    \end{align*}
where $\chi(L)$ denotes the maximal Euler characteristic of a Seifert surface of $L$ without closed components.
\end{Thm}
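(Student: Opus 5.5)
The plan is to combine three ingredients: Hayden's theorem that a quasipositive link has a quasipositive braid representative of minimal braid index, a count of the number of bands via the slice--Bennequin equality, and a bound on the length of each band coming from Ito's quantitative arguments.

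\textbf{Step 1 (minimal index).} Set $n=b(L)$. By Hayden~\cite{hayden2018minimal}, a quasipositive link $L$ is the closure of a quasipositive $n$-braid
\[
\beta=\prod_{i=1}^{k} w_i\sigma_{j_i}w_i^{-1}.
\]
Its closure bounds the braided ribbon surface $\Sigma\subset B^4$ made of $n$ disks and $k$ bands, so $\chi(\Sigma)=n-k$.

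\textbf{Step 2 (number of bands).} By the slice--Bennequin inequality (Rudolph), $\Sigma$ realizes the maximal Euler characteristic $\chi_4(L)$ among slice surfaces without closed components, so $k=n-\chi_4(L)$. A Seifert surface pushed into $B^4$ is such a slice surface, so $\chi(L)\le\chi_4(L)$. Hence
\[
k\le n-\chi(L)=b(L)-\chi(L).
\]

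\textbf{Step 3 (length of each band).} It then suffices to modify $\beta$, keeping $n$ and $k$ fixed and not changing the closure, until each factor has Artin length at most $2n$. The conjugating words $w_i$ are the only source of unbounded length, and within the braid group alone they cannot be shortened in general. However, we only need to preserve the link, so the following moves are allowed:
\begin{itemize}
\item conjugating the whole braid;
\item Hurwitz moves, which replace two consecutive factors $b_ib_{i+1}$ by $b_{i+1}(b_{i+1}^{-1}b_ib_{i+1})$;
\item isotopies of the braided surface $\Sigma$ through braided ribbon surfaces, which slide and retract the bands.
\end{itemize}
Following Ito~\cite{Ito2022quantitative}, the aim is to show that after such moves every factor becomes a band whose core arc, in the punctured disk, is an embedded arc of controlled complexity. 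Concretely, I would want each factor to have the form $u\,\sigma_j\,u^{-1}$ with $u$ a word of length less than $n$. Then each factor has length at most $2(n-1)+1<2n$, and the total length is at most $2n\cdot k\le 2b(L)\bigl(b(L)-\chi(L)\bigr)$.

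To carry out Step 3, I would first try an inductive argument on the factors. Using the full twist and Hurwitz moves, cyclically reduce the arcs representing the bands. Use Ito's estimate of the dual Garside or band-generator length of a quasipositive braid in terms of $k$ and $n$ to rewrite $\beta$ as a product of at most $k$ conjugates of band generators $a_{rs}$, each of length at most $2n$.

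\textbf{Main obstacle.} This is Step 3: producing a uniform bound on the conjugators, since a general quasipositive factorization has arbitrarily long $w_i$. The delicate point is to ensure that the shortening is achieved only by moves that preserve the closure, the strand number $n$, and the band count $k$. Hayden's braid-foliation techniques are presumably needed to guarantee that the simplification can be done without stabilizing.
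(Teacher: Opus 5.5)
Your Steps 1 and 2 are sound: the band count $k=b(L)-\chi_4(L)\le b(L)-\chi(L)$ via slice--Bennequin is correct. But Step 3 carries the entire content of the theorem, and it states the desired conclusion rather than arguing for it. The moves you allow also cannot produce it:
\begin{itemize}
\item Hurwitz moves do not even change the element of $B_n$.
\item Isotopies of the braided ribbon surface move its boundary only through closed braids in the solid torus, i.e.\ by conjugation.
\end{itemize}
So you never leave the conjugacy class of Hayden's representative $\beta$. Nothing guarantees that this conjugacy class contains a braid of the required length. Distinct minimal-index representatives of $L$ need not be conjugate, and in general a short one is reached only through moves that change the conjugacy class.

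There are two further problems with Step 3. There is no estimate in \cite{Ito2022quantitative} of a dual Garside or band-generator length of quasipositive braids in terms of $k$ and $n$. And your target, a quasipositive factorization in which every factor is $u\sigma_ju^{-1}$ with $|u|<n$, is stronger than the theorem. The theorem bounds only the Artin length of the braid, not the length of a quasipositive factorization of it, and you give no argument for the stronger claim.

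The missing ingredient is the exchange move $\sigma_{n-1}X\sigma_{n-1}^{-1}Y\leftrightarrow\sigma_{n-1}Y\sigma_{n-1}^{-1}X$ with $X,Y\in B_{n-1}$. The proof of Ito's Theorem~1.3 \cite{Ito2022quantitative} builds on Birman--Menasco's braid foliation techniques \cite{birman1992studying}. It shows that \emph{every} minimal-index braid representative of $L$ is connected, by conjugations and exchange moves, to one of Artin length at most $2b(L)(b(L)-\chi(L))$. This bound comes from the braid foliation argument on a Seifert surface, not from a count of bands.

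You also need the fact that exchange moves preserve quasipositivity \cite{orevkov2000markov,birman2000transversally}, as observed by Hayden \cite{hayden2018minimal}. Apply Ito's simplification to Hayden's quasipositive minimal-index representative. It ends at a short braid that is still quasipositive, which proves the theorem outright and makes your Step 2 unnecessary.
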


We remark that Ito's result~\cite{Ito2022quantitative} in fact yields a sharper but slightly more complicated upper bound on the length of a suitable quasipositive minimal-index representative. Since the braid index and the maximal Euler characteristic of a link are computable~\cite{Ito2022quantitative,HakenNormal}, cf.~\cite{TollefsonWang,AgolHassThurston}, we obtain the following corollary by enumerating all $b(L)$-braids with a bounded word length and with closure $L$. 

\begin{Cor}\label{cor:QP_links}
There exists an algorithm that takes as input an oriented link $L$ and outputs a finite list of braids $\beta_1,\ldots,\beta_N\in B_{b(L)}$, such that $L$ is quasipositive if and only if at least one $\beta_i$ is quasipositive.

In particular, quasipositivity for links is decidable if quasipositivity for braids is decidable.\qed
\end{Cor}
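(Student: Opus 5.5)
The statement to prove is Corollary~\ref{cor:QP_links}. The plan is to turn Theorem~\ref{thm:finite_QP} into an enumeration procedure. The algorithm runs as follows.

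\textbf{Step 1: compute the invariants.} From a diagram of $L$ we compute the braid index $b=b(L)$, using Ito's algorithm~\cite{Ito2022quantitative}. We also compute $\chi(L)$ by normal surface theory~\cite{HakenNormal}. Both are computable, as recalled above. Set $\ell = 2b(b-\chi(L))$.

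\textbf{Step 2: list the candidate words.} We list all words in the Artin generators $\sigma_1^{\pm1},\dots,\sigma_{b-1}^{\pm1}$ of length at most $\ell$. There are finitely many, at most $(2b-1)^{\ell}$ including the empty word.

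\textbf{Step 3: filter by closure.} For each word $w$ we decide whether its closure $\hat w$ is isotopic to $L$ as an oriented link. This is decidable, since link equivalence in $S^3$ is algorithmically decidable (Haken--Hemion--Matveev, or via the geometrization-based homeomorphism algorithms for the link complements together with peripheral data).

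\textbf{Step 4: output.} The braids $w$ that pass Step 3 are the list $\beta_1,\dots,\beta_N$. Each lies in $B_{b}$ and has closure $L$.

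\textbf{Correctness.} Suppose some $\beta_i$ is quasipositive. Then its closure $L$ is quasipositive by definition. Conversely, suppose $L$ is quasipositive. By Theorem~\ref{thm:finite_QP}, $L$ has a quasipositive $b$-braid representative $\beta$ of word length at most $\ell$. Then $\beta$ occurs in Step 2 and survives Step 3, so $\beta=\beta_i$ for some $i$. The ``in particular'' clause follows by running the assumed braid-quasipositivity algorithm on each $\beta_i$.

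\textbf{Main obstacle.} The only nontrivial point is the decidability of oriented link equivalence in Step 3, which is needed to cut the list down to braids with closure $L$. If we want to avoid that, a Markov-move search would only semi-decide equivalence and so is insufficient. A cleaner fallback is to output all candidate words together with a certified equivalence check, relying on the known solution to the link isotopy problem.
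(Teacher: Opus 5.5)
Your proposal is correct and is essentially the paper's own argument: compute $b(L)$ and $\chi(L)$, enumerate all braid words in $B_{b(L)}$ of length at most $2b(L)\big(b(L)-\chi(L)\big)$, keep those whose closure is $L$, and use Theorem~\ref{thm:finite_QP} for the nontrivial direction. The one step you spell out that the paper leaves implicit in the phrase ``with closure $L$'' is that this filtering relies on the decidability of oriented link equivalence in $S^3$.
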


\begin{Rmk}\label{rem:SQP_decidable_braid}
Strong quasipositivity for a braid is decidable. Indeed, the Birman--Ko--Lee band generators define the so-called dual positive braid monoid~\cite{BirmanKoLee,BessisDual}. By definition, a braid $\beta$ is strongly quasipositive if and only if $\beta$ lies in the dual positive braid monoid. Moreover, the dual Garside normal form
$\beta=\delta^pA_1\cdots A_r$
is effectively computable, and $\beta$ is strongly quasipositive if and only if $p=\inf(\beta)\ge0$. Thus strong quasipositivity of a given braid is decidable.

On the other hand, it appears to be open whether quasipositivity for a braid is decidable. We remark that this would be a consequence of \cite[Problem~2.36]{K3}. For $3$-braids, quasipositivity is known to be decidable by work of Orevkov~\cite{OrevkovQP3}. More generally, Orevkov proved decidability for braids of arbitrary index and algebraic length two~\cite{OrevkovQP2}, and for $4$-braids of algebraic length three~\cite{OrevkovQP4}.
\end{Rmk}

\subsection*{Statement on AI use}

Parts of the mathematical arguments in this article were developed through interaction with GPT-5.6 Sol Pro. In detail, the proof of Theorem~\ref{thm:finite_QP} was developed by the model after we prompted it to combine the arguments of~\cite{hayden2018minimal} and~\cite{Ito2022quantitative}, while the proof of Proposition~\ref{prop:QP_surfaces} was obtained after we explained to it the case of annuli using results of Dynnikov--Prasolov \cite{DynnikovPrasolov} and suggested that a generalization might be possible.

The authors have reviewed the final mathematical arguments and take responsibility for their correctness. No presented text was created by an AI.

\subsection*{Acknowledgments}
We thank Sebastian Baader, Juan González-Meneses, Lukas Lewark, and Maxim Prasolov for feedback and useful discussion.

\subsection*{Individual grant support}
MK was supported by a Ram\'on y Cajal grant \mbox{(RYC2023-043251-I)} and PID2024-157173NB-I00 funded by MCIN/AEI/10.13039/501100011033, by ESF+, and by FEDER, EU; and by a VII Plan Propio de Investigación y Transferencia (SOL2025-36103) of the University of Sevilla. This research was conducted during the period when QR served as a Clay Research Fellow.

\section{Deciding strong quasipositivity}

For the proof of Proposition~\ref{prop:QP_surfaces}, we will use the characterization of quasipositive surfaces in the standard tight $S^3$ in terms of ribbons of Legendrian graphs, which we briefly recall.
For more details on ribbons of Legendrian graphs, we refer to~\cite{BaaderIshikawa2009,Avdek,HaydenLegendrianRibbons,contact_Kirby_moves}.

A \emph{Legendrian ribbon} of a Legendrian graph $\Lambda$ in $(S^3,\xi_{st})$ is a compact oriented surface $R$ in $(S^3,\xi_{st})$ containing $\Lambda$ such that there exist 
\begin{enumerate}[(a)]
    \item a contact form $\alpha$ for $\xi_{st}$ whose Reeb vector field $R_\alpha$ is positively transverse to $R$; and
    \item a vector field $V$ on $R$ lying in the characteristic foliation of $R$, which is positively transverse to $\partial R$ and whose negative flow $\Phi_{-t}^{V}$ contracts $R$ onto $\Lambda$, i.e.
    $$
    \bigcap_{t>0}\Phi_{-t}^{V}(R)=\Lambda.
    $$
\end{enumerate} 
Every Legendrian graph admits a Legendrian ribbon, which contains the graph as a spine. The following result due to Baader and Ishikawa~\cite{BaaderIshikawa2009} identifies Legendrian ribbons with quasipositive surfaces. 

\begin{Lem}[Baader--Ishikawa, Lemma 2.1 and Theorem 2.2 in~\cite{BaaderIshikawa2009}]\label{lem:QP_Legendrian_ribbon}
A compact oriented surface $F$ in $S^3$ is quasipositive if and only if it is isotopic (as an oriented surface) to a Legendrian ribbon in $(S^3,\xi_{{st}})$.\qed
\end{Lem}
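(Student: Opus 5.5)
The statement is Lemma 2.1 and Theorem 2.2 of Baader--Ishikawa, so my plan is to reconstruct their two implications. The bridge is the standard model of a quasipositive surface: a collection of parallel disks $D_1,\dots,D_n$ stacked along a transverse direction, joined by positive twisted bands $b_{ij}$ that are embedded according to a quasipositive band presentation $\prod_k w_k\sigma_{i_k}w_k^{-1}$.

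For the forward direction, starting from such a surface $F$, I would build a Legendrian graph $\Lambda$ with the following pieces:
\begin{itemize}
\item one vertex for each disk $D_i$, placed on a Legendrian arc transverse to the Reeb direction;
\item one edge for each band, realized as a Legendrian arc whose front projection follows the braid word $w_k$;
\item at each band, one crossing in the front, arranged so that the contact framing differs from the surface framing exactly by the positive half-twist of the band.
\end{itemize}
Then I check that the standard Legendrian ribbon of $\Lambda$ (a neighborhood of $\Lambda$ foliated by Legendrian flow lines emanating from $\Lambda$) is isotopic to $F$. This amounts to comparing framings edge by edge.

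For the converse, starting from a Legendrian ribbon $R$ of a Legendrian graph $\Lambda$, I would argue in four steps.
\begin{enumerate}
\item Use the contractibility in condition (b) to see that $R$ deformation-retracts onto $\Lambda$.
\item Reduce $\Lambda$ to a Legendrian graph with a single vertex for each component, by contracting a maximal tree. Contracting a Legendrian edge is a Legendrian isotopy of the graph, and it changes the ribbon only by an isotopy.
\item Put the resulting bouquet of Legendrian loops into a braided position relative to the Reeb flow. Here I would use the Reeb transversality in condition (a), which says that $R$ is transverse to a Reeb field. Choosing $\alpha$ with Reeb flow given by the Hopf fibration (after perturbing), $R$ is transverse to the Hopf circles. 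I then view $R$ as a branched cover over a disk in the base, analogously to Rudolph's braided surfaces.
\item Read off the bands: each Legendrian edge contributes a band whose twisting, given by the Thurston--Bennequin framing relative to the ribbon, is forced to be a positive half-twist. This shows $R$ is a braided surface with only positive bands, i.e.\ quasipositive.
\end{enumerate}

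The main obstacle is step 3 of the converse: turning Reeb transversality into an honest braided-surface structure with positive bands. My first attempt would follow Baader--Ishikawa and work in a neighborhood of a Legendrian unknot or Hopf fibration. There I would isotope $\Lambda$ so that its front projection has only cusps and crossings of a controlled sign. I would then check locally that the ribbon near each crossing or cusp is a positive band attached to a stack of disks. The sign check comes from the contact condition: the characteristic foliation of the ribbon always twists positively relative to the Reeb direction.
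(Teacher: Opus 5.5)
The paper does not prove this lemma; it quotes it from Baader--Ishikawa, so your argument has to stand on its own. There is a genuine gap, and it sits in Step~3 of the converse, which carries all the weight.

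\textbf{Step~3 of the converse.} Condition~(a) only says that $R$ is positively transverse to the Reeb field of \emph{some} contact form $\alpha=f\alpha_0$, where $\alpha_0$ is the standard form whose Reeb flow is the Hopf fibration. ``Perturbing'' does not let you replace $\alpha$ by $\alpha_0$: transversality to one Reeb field implies nothing about another.

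The more serious problem remains even if $R$ were transverse to the Hopf fibres. Then the Hopf projection restricted to $R$ is an immersion $R\to S^2$. Transversality to a fibration produces no branch points at all, hence no bands. Rudolph's braided surfaces are braided with respect to the braid axis, and their bands correspond to the branch points of a simple branched covering of a disk. Nothing in the Reeb/Hopf picture supplies that structure.

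\textbf{Steps~2 and~4.} Step~4 asserts that each edge is ``forced'' to contribute a positive half-twist because the characteristic foliation ``twists positively relative to the Reeb direction''. That is precisely the content of the theorem, and no mechanism is given. It also conflicts with Step~2. Each band of a braided surface joins two distinct disks, since each branch point has transposition monodromy. So the loops of a one-vertex spine cannot be read as bands at all.

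\textbf{What is missing} is an actual bridge from Legendrian data to positive bands. Two possible routes:
\begin{itemize}
\item a normal form for fronts of Legendrian graphs in which the ribbon visibly decomposes into disks and positive bands; or
\item an argument placing every Legendrian ribbon as a full subsurface of a surface already known to be quasipositive, followed by Rudolph's theorem that full subsurfaces of quasipositive surfaces are quasipositive. A natural candidate is a page of an open book supporting $\xi_{st}$ obtained from the disk open book by positive stabilizations, reached for instance by extending $\Lambda$ to the $1$-skeleton of a contact cell decomposition.
\end{itemize}

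\textbf{The forward direction} is a reasonable outline, but its key verification is misdescribed and only asserted. The ribbon of $\Lambda$ carries the contact framing by definition. So you must check two things:
\begin{itemize}
\item the cyclic orders at the vertices induced by $\xi_{st}$ agree with those of $F$;
\item the contact framing of every cycle of $\Lambda$ equals the framing that $F$ induces on it.
\end{itemize}
In a front, the half-twists of the ribbon come from cusps, not crossings: each cusp contributes $-\tfrac12$ relative to the blackboard framing, as in $\mathrm{tb}=\mathrm{writhe}-\tfrac12\#\mathrm{cusps}$. Moreover, the over/under data at a front crossing is dictated by slopes. So it is not automatic that a Legendrian core ``follows'' an arbitrary conjugating word $w_k$. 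You would have to say how the $w_k$ are realized, and check that the cusps this requires produce exactly one positive half-twist per band and no additional twisting.
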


We will also use the following consequence of the work of Dynnikov and Prasolov~\cite{DynnikovPrasolovDistinguishing}, regarding the algorithmic realization of a multicurve on a surface as the dividing set on a convex surface.
\begin{Lem}[Dynnikov--Prasolov~\cite{DynnikovPrasolovDistinguishing}]\label{lem:DP_dividing_set_algorithm} 
There is an algorithm which, given a closed oriented surface $\Sigma$ in $(S^3,\xi_{st})$ and an oriented $1$-manifold $\Gamma$ on $\Sigma$, decides whether or not the pair $(\Sigma,\Gamma)$ is isotopic to a convex surface and its dividing set.
\end{Lem}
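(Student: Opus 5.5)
The plan is to translate the question entirely into Dynnikov--Prasolov's combinatorial model of convex surfaces, namely \emph{rectangular diagrams of surfaces}, and then to decide it by a bounded enumeration.

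\textbf{Step 1 (combinatorial model).} I would recall the relevant results of \cite{DynnikovPrasolovDistinguishing}:
\begin{enumerate}[(i)]
\item every rectangular diagram of a surface $\Pi$ determines a surface $\widehat\Pi\subset S^3$ that is convex with respect to $\xi_{st}$, and its dividing set $\Gamma_\Pi$ can be read off combinatorially from the rectangles and their vertices;
\item conversely, every convex surface in $(S^3,\xi_{st})$ together with its dividing set is isotopic, through convex surfaces, to some $(\widehat\Pi,\Gamma_\Pi)$.
\end{enumerate}
Via Giroux flexibility, the question then becomes whether $(\Sigma,\Gamma)$ is isotopic to $(\widehat\Pi,\Gamma_\Pi)$ for some rectangular diagram $\Pi$.

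\textbf{Step 2 (semi-decision for ``yes'').} I would enumerate rectangular diagrams $\Pi$ in order of complexity. For each one, I would triangulate $(S^3,\widehat\Pi,\Gamma_\Pi)$ and test whether it is isotopic to the given $(S^3,\Sigma,\Gamma)$, with orientations respected. This test is decidable: the complement of $\Sigma$ is Haken after cutting, so Haken--Hemion type homeomorphism algorithms apply to the pair, keeping track of the curves $\Gamma$ on the boundary. Isotopy in $S^3$ coincides with orientation-preserving homeomorphism of pairs, since every orientation-preserving homeomorphism of $S^3$ is isotopic to the identity. If $(\Sigma,\Gamma)$ is realizable, this search halts with a positive answer.

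\textbf{Step 3 (termination, the main obstacle).} A ``no'' answer needs either an a priori bound on the complexity of a realizing $\Pi$ in terms of the input triangulation, or a complementary semi-decision procedure for non-realizability.

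I would first try to bound the complexity. Put $\Sigma$ in normal form with respect to a fixed triangulation of $S^3$. Then use the Dynnikov--Prasolov procedure that converts a surface, after an isotopy of controlled size, into a rectangular diagram, and check that the complexity of the resulting diagram is bounded by a computable function of the number of normal pieces. The dividing set produced this way is only one particular one, however. Any other realizable $\Gamma'$ is related to it by bypass attachments and stabilization-type moves, which correspond to the elementary moves of rectangular diagrams. So the real work is to show, using their monotonic-simplification-type results for these moves, that a realizing diagram, if one exists, can be found among diagrams whose complexity is bounded in terms of $\Sigma$ and $\Gamma$.

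If that fails, I would instead look for a semi-decision procedure for non-realizability. The natural one splits $S^3$ along $\Sigma$ and uses the classification of tight contact structures with prescribed convex boundary on the complementary pieces. Realizability is equivalent to the tight structure on $S^3$ admitting $\Sigma$ as a convex surface with dividing set $\Gamma$. This holds if and only if each complementary piece carries a tight structure with boundary dividing set $\Gamma$, and those pieces glue to $\xi_{st}$; Dynnikov--Prasolov reduce the gluing check to equivalence of rectangular diagrams, which is again a finite search.

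Combining Steps 2 and 3 yields the algorithm. I expect the complexity bound in Step 3 to be where essentially all the difficulty lies; this is exactly the content one has to extract from \cite{DynnikovPrasolovDistinguishing}.
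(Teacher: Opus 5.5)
Your Steps~1--2 give only a semi-decision procedure for a positive answer. The termination argument that would make this an algorithm is Step~3, and there you list two strategies and defer the real content to ``what one has to extract from \cite{DynnikovPrasolovDistinguishing}''. Neither strategy works as stated.
\begin{itemize}
\item \emph{Complexity bound.} Dividing sets on isotopic convex surfaces are related by bypass moves, but this gives no control on the number or size of those moves. You cite no simplification theorem for rectangular diagrams of surfaces that would supply such a bound.
\item \emph{Certifying non-realizability.} This would need an effective, finite classification of tight contact structures with prescribed convex boundary on the pieces of $S^3$ cut along $\Sigma$. No such classification exists in general: the pieces can be arbitrary knotted handlebody or toroidal exteriors, and already $T^2\times I$ carries infinitely many tight structures with fixed boundary. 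Even with such a list, you would still have to decide whether each glued structure is tight. Calling that check ``a finite search'' over rectangular diagrams is unjustified without a finiteness theorem.
\end{itemize}
So the decisive step is missing.

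The paper does not reprove the lemma. It notes that the statement appears explicitly in \cite{DynnikovPrasolovDistinguishing}. It then derives it from their Corollary~2.1 and the discussion right after it, specialized to the case where the link $L$ and its rectangular diagram $R$ are empty. That result is exactly the decidability input your Step~3 lacks. Once you invoke it, the enumeration of Step~2 and the speculative arguments of Step~3 are no longer needed.

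A minor point: in Step~2, the pieces of $S^3$ cut along $\Sigma$ need not be Haken. For example, the outer piece for two unlinked tori is reducible. An isotopy test for $(\Sigma,\Gamma)$ can still be obtained from general $3$-manifold homeomorphism algorithms.
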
 

\begin{proof}
The statement is explicitly written in the third to last paragraph on Page~702 of \cite{DynnikovPrasolovDistinguishing}, although no explicit proof was given. The result follows essentially from Corollary~2.1 in~\cite{DynnikovPrasolovDistinguishing} and the discussion immediately following it by choosing in that corollary the link $L$ and its rectangular diagram $R$ to be empty.
\end{proof}

\begin{proof}[Proof of Proposition~\ref{prop:QP_surfaces}]
We assume $F$ has no closed components, since otherwise it is not quasipositive by definition.

We identify a tubular neighborhood $\nu(F)$ of $F$ with a product $[-1,1]\times F$. After smoothing the corners, its boundary $\partial\nu(F)$ is diffeomorphic to the double $D(F)$ of $F$. Since every component of $F$ has nonempty boundary, $\Gamma_F:=\{0\}\times\partial F$ divides $D(F)$ into two regions $F_\pm$ containing $\{\pm1\}\times F$. We orient $\Gamma_F$ as the boundary of $F_+$.

We claim that $F$ is quasipositive if and only if the pair $(D(F),\Gamma_F)$ is isotopic to a convex surface and its dividing set.

If $F$ is quasipositive, by Lemma~\ref{lem:QP_Legendrian_ribbon}, we may assume that it is a Legendrian ribbon. Taking $\nu(F)$ to be a small tubular neighborhood of $F$, we see from condition (b) of Legendrian ribbonness that the characteristic foliation on $D(F)$ is divided by $\Gamma_F$ in the sense of Giroux. Thus, $(D(F),\Gamma_F)$ is isotopic to a convex surface by Giroux (see \cite[Proposition~II.2.1]{Giroux1991}; the compatibility of the orientation on $\Gamma_F$ follows from the condition (a)). Conversely, if $D(F)$ is convex with dividing set $\Gamma_F$, by an isotopy, we may Legendrian realize any spine $\Lambda$ of $F_+$ by the Legendrian realization principle so that $F_+$ is a Legendrian ribbon of it; see, for example, \cite[Lemma~2.2]{BakerOnaran}. In particular, $F_+$, hence also $F$, is quasipositive by Lemma~\ref{lem:QP_Legendrian_ribbon}.

Finally, from a triangulation of the pair $(S^3,F)$ we can algorithmically construct a triangulation of the triple $(S^3,D(F),\Gamma_F)$. Applying the algorithm from Lemma~\ref{lem:DP_dividing_set_algorithm} to $(D(F),\Gamma_F)$ therefore decides whether or not $F$ is quasipositive.
\end{proof}

\begin{proof}[Proof of Proposition~\ref{prop:finite_Seifert_surfaces}]
We first reduce to the case when $L$ is nonsplit. Suppose $L=L_1\sqcup\cdots\sqcup L_k$ where each $L_i$ is nonempty and nonsplit and chosen to be contained in some ball $B_i\subset S^3$, where the $B_i$'s are pairwise disjoint. Let $S_i^{\alpha_i}$, $\alpha_i\in A_i$ be a list of all Seifert surfaces of $L_i$ of maximal Euler characteristic, up to free isotopy, all chosen to be contained in $B_i$, we claim that $S_1^{\alpha_1}\sqcup\cdots\sqcup S_k^{\alpha_k}$, $\alpha_i\in A_i$, is a list of all Seifert surfaces of $L$ of maximal Euler characteristic, up to free isotopy.

Suppose $S$ is a Seifert surface of $L$ with maximal Euler characteristic. By considering the intersection with the essential spheres encircling the $L_i$'s, it is easy to see that $S=S_1\cup\cdots\cup S_k$ where each $S_i\subset S^3$ is a Seifert surface for $L_i$ with maximal Euler characteristic. We induct on $j$ to show that $S$ is freely isotopic to a surface of the form $(S_1^{\alpha_1}\sqcup\cdots S_j^{\alpha_j})\cup S_{j+1}'\cup\cdots S_k'$ for some $\alpha_i\in A_i$, $1\le i\le j$. The case $j=0$ is trivial. Suppose $1\le j\le k$. We may assume $j=k$, since we can always forget $S_{j+1},\cdots,S_k$ by dragging them along during the isotopy. Thus, by assumption, $S$ is freely isotopic to a surface of the form $(S_1^{\alpha_1}\sqcup\cdots S_{k-1}^{\alpha_{k-1}})\cup S_k$. By an innermost argument, we may assume $S_k\cap B_i=\emptyset$ for $i<k$. Now, by assumption, we may isotope $S_k$ in $S^3$ to some surface $S_k^{\alpha_k}$, $\alpha_k\in A_k$, during which we drag along the balls $B_i$ together with the surfaces $S_i^{\alpha_i}$ inside, $i<k$. Then, we perform a further isotopy in the exterior of $S_k^{\alpha_k}$ to move the balls $B_i$ back to their original positions, $i<k$. This finishes the inductive step, hence the reduction to the nonsplit case.\smallskip

If $L$ is the unknot or a Hopf link, there is a unique Seifert surface with maximal Euler characteristic, up to free isotopy. From now on, we assume $L$ is a nonsplit link that is not the unknot or a Hopf link.

\textbf{Claim 1}: If $S$ and $S'$ are Seifert surfaces of $L$ that are related by twisting along a torus $T=T^2\subset S^3\backslash \nu L$, then they are freely isotopic in $S^3$.

Here, by twisting along $T$, we mean that there is a level-preserving self-diffeomorphism $\Phi$ of a closed neighborhood $I\times T$ of $T$ rel boundary, such that $S'$ is obtained from $S$ by cutting $I\times T$ out and regluing it back by $\Phi$. Regarding $\Phi$ as a diffeomorphism of $S^3$, it is isotopic to $\mathrm{id}_{S^3}$ since $\mathrm{Diff}^+(S^3)$ is connected \cite{cerf1968sur}. This shows $S'=\Phi(S)$ is freely isotopic to $S$, proving the claim.\smallskip

Let $T_1,\cdots,T_k$ be the list of all JSJ tori and boundary tori of $S^3\backslash \nu L$.

\textbf{Claim 2}: One can algorithmically write down a (finite) list of isotopy classes of multicurves on $T_1\cup\cdots\cup T_k$ such that for any Seifert surface $S$ of $L$ with maximal Euler characteristic that is in minimal position with $T_1\cup\cdots\cup T_k$, their intersection lies in one of the classes in the list.

\textbf{Claim 3}: For any JSJ piece $M$ of $S^3\backslash \nu L$, any multicurve $C\subset\partial M$, and any integer $\chi_0$, there is an algorithm to find all properly embedded essential surfaces $S_0\subset M$ without closed components, with $\partial S_0=C$ and $\chi(S_0)\ge\chi_0$, up to twisting along incompressible tori.

Note that if $S$ is a Seifert surface of $L$ with maximal Euler characteristic that is in minimal position with respect to $T_1\cup\cdots\cup T_k$, then the part of $S$ inside any JSJ piece of $S^3\backslash \nu L$ is essential and has Euler characteristic bounded from above and (hence) from below. Therefore, Proposition~\ref{prop:finite_Seifert_surfaces} is a consequence of Claims 1,2,3. It remains to prove Claims 2 and 3.\smallskip

If $M$ is a hyperbolic piece in $S^3\backslash \nu L$, since the only incompressible $\partial$-incompressible surfaces with nonnegative Euler characteristic are the spheres, disks, and $\partial$-parallel tori, a normal surface theory argument (see e.g. \cite[Theorem~6.12]{Lackenby2022algorithms}) shows that there is an algorithmically effective finite list of essential surfaces in $M$ with Euler characteristic bounded from below by a prescribed constant, up to twisting along the boundary tori. This proves Claim 3 for the piece $M$ as well as Claim 2 for all boundary tori of $M$.

To prove Claims 2 and 3 in their full generality, we need to analyze the Seifert fibered pieces in the JSJ decomposition of link complements. By the classification of Seifert fibered submanifolds of $S^3$ (see e.g.~\cite[Proposition~4]{budney2006jsj}), every Seifert fibered piece in the JSJ decomposition of $S^3\backslash \nu L$ is one of the following, as a submanifold of $S^3$:
\begin{enumerate}
\item The complement of a $T(np,nq)$ torus link, $\mathrm{gcd}(p,q)=1$, $|p|,|q|>1$, $n\ge1$.
\item The complement of a $C(np,nq)$ cable link in a solid torus in $S^3$, $\mathrm{gcd}(p,q)=1$, $|p|>1$, $n\ge1$.
\item Some submanifold $P_n\times S^1$ of $S^3$, $n\ge3$, where $P_n$ is the connected planar surface with $n$ boundary components.
\end{enumerate}
Each of these manifolds admits a unique Seifert fibration. The base orbifolds are
\begin{enumerate}
\item $P_n(|p|,|q|)$,
\item $P_{n+1}(|p|)$,
\item $P_n$,
\end{enumerate}
respectively. Note that each of these orbifolds has negative Euler characteristic.

In a connected Seifert fibered manifold $M$ over an orbifold $\mathcal O$, essential surfaces are exactly the horizontal surfaces and the vertical surfaces without $\partial$-parallel components. Here, we recall that a surface $S_0\subset M$ is \textit{horizontal} if $S_0\to\mathcal O$ is a cyclic cover, and \textit{vertical} if it is a union of some regular fibers. A (marked) horizontal surface $S_0\subset M$ is determined by the isomorphism type of the cyclic cover $S_0\to\mathcal O$ together with an element in a lattice affine over $H^1(|\mathcal O|)$, where $|\mathcal O|$ is the underlying smooth surface of $\mathcal O$; this is because any two horizontal surfaces $S_0,S_0'\subset M$ inducing the same cyclic cover over $\mathcal O$ differ by a monodromy map $\pi_1^{orb}(\mathcal O)\to\Z$, which amounts to a class in $H^1(|\mathcal O|)$. (Essential) vertical surfaces are in one-to-one correspondence with (essential) multicurves (with arc components allowed) in $\mathcal O$.

In our setup, if $M$ is a Seifert fibered piece in $S^3\backslash \nu L$, since the base orbifold of $M$ has negative Euler characteristic, every horizontal essential surface in $M$ with Euler characteristic bounded from below has bounded covering degree over $\mathcal O$. In particular, if $T\subset\partial M$ is a boundary component and $S\subset M$ is an essential surface with Euler characteristic bounded from below, the geometric intersection between $S\cap T$ and the fiber slope on $T$ is bounded from above.

\begin{proof}[Proof of Claim 2]\hfill

\textbf{Case 1}: $T_i$ is adjacent to a hyperbolic piece.

Then, $S\cap T_i$ has finitely many possibilities that can be enumerated algorithmically by the discussion of the hyperbolic case above.

\textbf{Case 2}: $T_i$ is adjacent to two Seifert fibered pieces.

Then, the two adjacent Seifert fibered pieces have distinct fiber slopes $s_1,s_2$ on $T_i$. The argument above shows that $S\cap T_i$ has (effectively) bounded geometric intersection with both $s_1$ and $s_2$. This enables us to write down a finite list of all possible isotopy classes of $S\cap T_i$.

\textbf{Case 3}: $T_i$ is a boundary torus of $S^3\backslash \nu L$ that is adjacent to a Seifert fibered piece $M$.

Since $S$ is a Seifert surface of $L$, it intersects the meridian slope on $T_i$ geometrically once. If the meridian slope on $T_i$ induced from $L$ is different from the fiber slope on $T_i$ induced from $M$, the same argument as in Case 2 proves the claim.

By the description above of Seifert fibered pieces that could possibly appear, we see that the meridian slope on $T_i$ is different from the fiber slope unless $M=P_n\times S^1$ for some $n\ge3$. In the exceptional case $M=P_n\times S^1$, $M$ has at most one boundary component that is a boundary of $S^3\backslash \nu L$ on which the meridian agrees with the fiber, since Dehn-filling two components of $P_n\times S^1$ by the fiber slopes yields a manifold with a nonseparating $S^2$. If $T_i$ is such a boundary component, since the possible intersection $S\cap T_j$ for all $T_j\subset M$ with $j\ne i$ can be enumerated, the possible homology classes of $S\cap T_i$ can be enumerated. Since $S$ is a Seifert surface, $S\cap T_i$ has no algebraically canceling components, hence its possible isotopy classes can be enumerated, proving the claim.
\end{proof}

\begin{proof}[Proof of Claim 3]
The case when $M$ is hyperbolic was dealt with before. Assume therefore $M$ is Seifert fibered over some orbifold $\mathcal O$.

\textbf{Step 1}: Enumerate horizontal surfaces $S_0\subset M$ with $\chi(S_0)\ge\chi_0$, up to twisting along embedded vertical tori.

The covering degree of $S_0\to\mathcal O$ is bounded above by $\chi_0/\chi(\mathcal O)$. We may enumerate all such cyclic coverings by enumerating homomorphisms $\pi_1^{orb}(\mathcal O)\to\Z/n$ for all $1\le n\le\lfloor\chi_0/\chi(\mathcal O)\rfloor$. For each fixed isomorphism type of $S_0\to\mathcal O$, there are effectively finitely many covering isomorphisms $\partial S_0\cong C$ up to equivalence. For each such chosen isomorphism, all horizontal surfaces either form an empty set, or form a torsor over $H^1(|\mathcal O|,\partial|\mathcal O|)=H_1(|\mathcal O|)$. In the latter case, $H_1(|\mathcal O|)$ acts on the set of horizontal surfaces by torus twists along vertical tori, and an element of the torsor can be written down algorithmically. 

\textbf{Step 2}: Enumerate essential vertical surfaces in $M$ without closed components, up to twisting along embedded vertical tori.

If $C$ is not a union of fibers, there is no such vertical surface. Suppose $C$ is a union of fibers that projects down to some finite collection $P$ of points on $\partial\mathcal O$. Then, essential vertical surfaces in $M$ without closed components with boundary $C$ are in one-to-one correspondence with essential multiarcs in $\mathcal O$ bounding $P$. If $\alpha\subset\mathcal O$ is a multiarc bounding $P$ and $A_\alpha$ is the corresponding vertical surface, twisting $A_\alpha$ positively/negatively once along the vertical torus $T_\gamma$ corresponding to a simple closed curve $\gamma\subset\mathcal O$ yields the vertical surface $A_{\tau_\gamma^{\pm1}(\alpha)}$, where $\tau_\gamma$ is the Dehn twist along $\gamma$. Since $\mathcal O\backslash\alpha$ has finitely many possible topological types, there is an algorithmically effective finite collection of such $\alpha$ up to the mapping class group action of $\mathcal O$. Equivalently, since Dehn twists generate mapping class group, there is an algorithmically effective finite collection of essential vertical surfaces in $M$ bounding $C$ up to twisting along embedded vertical tori.
\end{proof}

The proof of Proposition~\ref{prop:finite_Seifert_surfaces} is complete.
\end{proof}

\begin{Rmk}
In fact, the proof of Proposition~\ref{prop:finite_Seifert_surfaces} gives an algorithm to enumerate all incompressible Seifert surfaces with Euler characteristic bounded from below by any given bound, up to free isotopy, of any given link.
\end{Rmk}

\begin{proof}[Proof of Theorem~\ref{thm:SQP_links}]
A quasipositive Seifert surface $\Sigma$ of $L$ achieves equality in the Bennequin inequality $\overline{sl}(L)\le-\chi(\Sigma)$ \cite{Bennequin1983}, where $\overline{sl}$ denotes the maximal self-linking number. Therefore, a quasipositive Seifert surface of $L$ maximizes the Euler characteristic among Seifert surfaces of $L$. By Proposition~\ref{prop:finite_Seifert_surfaces}, we can enumerate all Seifert surfaces of $L$ that maximize the Euler characteristic, up to free isotopy. By Proposition~\ref{prop:QP_surfaces}, we can decide whether or not there is a Seifert surface among this finite list that is quasipositive, and thereby decide whether or not $L$ is strongly quasipositive.
\end{proof}

\section{Deciding quasipositivity}
\begin{proof}[Proof of Theorem~\ref{thm:finite_QP}]
Hayden~\cite{hayden2018minimal} shows that every quasipositive link has a minimal-index braid representative that is quasipositive. We only need to control the word length of such a representative.

Building on arguments in Birman--Menasco~\cite{birman1992studying}, the proof of Ito~\cite[Theorem~1.3]{Ito2022quantitative} shows that every minimal-index braid representative of an oriented link $L$ is related to such a representative with word length at most $2b(L)(b(L)-\chi(L))$ via a sequence of conjugations and exchange moves. Here, an exchange move in $B_n$ is the move relating $\sigma_{n-1}X\sigma_{n-1}^{-1}Y$ to $\sigma_{n-1}Y\sigma_{n-1}^{-1}X$ for some $X,Y$ in the standard subgroup $B_{n-1}$ of $B_n$. Conjugations preserve quasipositivity of braids, as do exchange moves by work of Orevkov~\cite{orevkov2000markov} and Birman--Wrinkle~\cite[Figure~8]{birman2000transversally}, as observed explicitly by Hayden~\cite[proof of Theorem~1.2]{hayden2018minimal}. Thus \cite{Ito2022quantitative} gives the desired bound on the word length.
\end{proof}

\printbibliography

\end{document}